\newtheorem{theorem}{Theorem}[section]
\newtheorem{proposition}[theorem]{Proposition}
\theoremstyle{definition}
\newtheorem{definition}[theorem]{Definition}
\theoremstyle{remark}
\newtheorem{remark}[theorem]{Remark}
\newtheorem{question}[theorem]{Question}
\numberwithin{equation}{section}
\begin{document}

\title{Simple close curve magnetization and application to Bellman's lost in the forest problem}

\author{T. Agama}
\address{Department of Mathematics, African Institute for Mathematical science, Ghana
}
\email{theophilus@aims.edu.gh/emperordagama@yahoo.com}

\subjclass[2010]{Primary 51M16; Secondary 90C90}

\date{\today}


\keywords{simple close curve; magnetization; magnetic field; Bellman; forest; hiker}

\footnote{
\par
.}%

\begin{abstract}
In this paper, we introduce and develop the notion of simple closed-curve magnetization. We provide an application to the Bellman lost in the forest problem by assuming special geometric conditions between the hiker and the boundary of the forest.
\end{abstract}

\maketitle

\section{Introduction and problem statement}

The problem commonly referred to as \emph{Bellman's lost in the forest problem} lies at the intersection of geometric analysis and optimal path planning. Originating from classical questions in decision theory and pursuit/escape problems, the version propagated in the literature asks: given a person (hiker) placed at an unknown position and with unknown orientation within a bounded planar region (the ``forest''), what strategy minimizes the (worst-case) distance or time to reach the boundary? The formulation and a number of classical results and heuristics are surveyed in the literature; see, in particular, the early minimization perspective of Bellman \cite{bellman1956minimization} and the clear exposition by Finch and Wetzel \cite{finch2004lost}, as well as later explorations and computational treatments \cite{ward2008exploring}.\\

Despite much work on special cases (for example, circular forests or infinite strips) a general, easily implementable constructive method linking geometric structure of the boundary to a guaranteed short exit path remains elusive. In this paper, we propose a new framework - \emph{magnetization of simple closed curves} - that attaches to a simple closed boundary a (possibly dense) collection of boundary \emph{magnets} and a canonical map (the magnetization map) from the interior to vectors that point toward nearby boundary magnets. Our approach blends geometric partitioning of the interior with a nearest-boundary-point selection rule and yields a straightforward, algorithmic candidate for an exit path from any interior location.\\

\subsection*{Key ideas and intuition}

The central object introduced here is the magnetization map. Informally, we place a multiset of reference points (``magnets'') along the boundary of a simple closed curve $\mathcal C\subset\mathbb R^n$ and associate to each interior point $\mathbf v\in\mathcal C_{\mathrm{Int}}$ the displacement vector from $\mathbf v$ to the magnet on the boundary that minimizes the Euclidean distance to $\mathbf v$. This simple selection rule yields a piecewise-defined vector field on the interior whose integral curves trace direct routes toward the boundary. When the magnets are taken densely (or even equal to the entire boundary), the map becomes well adapted to arbitrary boundary geometry and, under an orthogonality/minimality condition, selects locally shortest straight-line exits.

There are three interlocking theoretical strands in the paper:\\

\begin{enumerate}
  \item \textbf{Topological/metric foundations.} We formalize what it means for a boundary to be \emph{magnetic} (magnets placed on the boundary, possibly dense) and develop the basic existence and uniqueness properties of the magnetization map. These results show that up to constant dilation of magnet vectors, the magnetic boundary determines the interior magnetization in a natural way (see Proposition \ref{uniqueness} and Theorems \ref{neibourhood magnetization}--\ref{existence}).
  \bigskip
  
  \item \textbf{Classification and invariants.} To make the method practical, we introduce notions of connectivity and isomorphism among magnetized simple closed curves, which allow one to group boundaries into equivalence classes that share the behaviour of magnetization. This classification clarifies when two geometric domains admit essentially the same magnetization-based exit strategy (see Definition \ref{isomorphic connected} and Proposition \ref{embedding-isomorphic}).
  \bigskip
  
  \item \textbf{Algorithmic exit rule and application.} Combining the existence/uniqueness results with the classification, we present a constructive recipe: given a forest boundary endowed with magnets dense on the boundary and an interior point (the hiker), choose the boundary magnet achieving the minimal Euclidean distance and take the straight-line segment toward it as the exit path. We show that this produces a short-often shortest path under our orthogonality condition-exit path and explain the assumptions under which the strategy provides a provably optimal or near-optimal solution.
\end{enumerate}

\subsection*{Relation to prior work}

The magnetization viewpoint is deliberately simple: it reduces the complex question of orientation to a nearest-boundary selection, and thereby transforms the lost-in-the-forest question into a geometric selection problem. This is philosophically complementary to the previous analytic and constructive efforts surveyed in \cite{finch2004lost,ward2008exploring}, which study explicit search trajectories for particular domains and worst-case path lengths. Our contribution is not to overturn those specialized analyses, but to provide a flexible geometric apparatus that (i) is easy to state and implement, (ii) canonically adapts to irregular boundaries via dense magnet placements, and (iii) yields clear necessary and sufficient conditions (expressed as minimality and orthogonality constraints) under which the chosen vector is a shortest exit direction.

\subsection*{Limitations and scope}

We emphasize two important caveats. First, the orthogonality condition $\mathbf v\cdot \mathbf u=0$ (used in several technical statements) is an imposed geometric convenience that, in some domains or placements of magnets, may not hold for the globally shortest escape path; when it fails, the magnetization map still provides a natural candidate path, but optimality must be reassessed. Second, implementing a truly dense magnet configuration amounts, in practice, to sampling the boundary densely; computational trade-offs between sampling density and guaranteed performance are discussed informally in the application sketch.

\subsection*{Contributions and summary of results}

Concretely, the paper makes the following contributions:
\begin{itemize}
  \item Formal definition of \emph{magnetization} for simple closed curves and the notion of \emph{magnetic boundary} (Definitions \ref{magnetization} and \ref{magnetic boundary}).
  \bigskip
  
  \item Uniqueness and local selection results for the magnetization map showing that each interior point admits a well-defined nearest magnet under mild hypotheses (Proposition \ref{uniqueness}, Theorem \ref{neibourhood magnetization}, Theorem \ref{existence}).
  \bigskip
  
  \item A classification framework (connectedness and isomorphism) for families of magnetized curves (Definition \ref{isomorphic connected}, Proposition \ref{embedding-isomorphic}).
  \bigskip
  
  \item A constructive application: an algorithmic sketch that uses the magnetization map to produce candidate exit paths for the Bellman lost-in-the-forest problem, together with discussion of conditions for optimality and limitations.
\end{itemize}

\subsection*{Organization of the paper}

The remainder of the paper is organized as follows. In Section 2, we introduce magnetization concepts and state the principal definitions. Section 3 establishes the following foundational properties: uniqueness, local selection, and existence theorems for magnetization. Section 4 develops a classification theory for magnetized simple closed curves and proves embedding/isomorphism results. Section 5 presents the application to the Bellman lost-in-the-forest problem: we give the algorithmic exit rule, sketch its correctness under our hypotheses, and discuss limitations and possible extensions. The paper concludes with a brief discussion of computational aspects and directions for further research.
\bigskip

The Bellman lost in the forest problem is usually stated in the following way:

\begin{question}[Bellman's lost in the forest problem]
Given a hiker in a forest with his orientation unknown, what is the best possible path to take to exit in the shortest possible time taking into consideration the shape of the forest and the dimension of the space covering the forest?
\end{question}
\bigskip

In this paper, we introduce and develop the notion of magnetization of simple close curves. Using this notion, we devise an algorithm that takes as input the shape of the forest and the dimension within which the forest resides and produces as output the \emph{optimal} path to be taken by the hiker $\vec{v}\in \mathbb{R}^n$. 

\section{Simple close curves with magnetic boundaries}

In this section, we introduce the notion of a simple close curve with magnetic boundaries equipped magnets. We study this concept and expose some connections with other notions.

\begin{definition}\label{magnetization}
Let $\mathcal{C}$ be a simple closed curve in $\mathbb{R}^n$ and $\mathbb{V}^n$ be the vector space in $\mathbb{R}^n$. By \emph{magnetization} of the interior of $\mathcal{C}$, denoted $\mathcal{C}_{\mathrm{Int}}$, with magnets $\vec{u}_1,\ldots \vec{u}_k$ on the boundary $\mathcal{C}_{\mathcal{B}}$, we mean the map
\begin{align}   \Lambda_{\vec{u}_1,\vec{u}_2,\ldots,\vec{u}_k}:\mathcal{C}_{\mathrm{Int}}\longrightarrow \mathbb{V}^n\nonumber
\end{align}
such that for any $\vec{v}\in \mathcal{C}_{\mathrm{Int}}$ with $\vec{v}\neq \vec{O}$ then 
\begin{align}
    \Lambda_{\vec{u}_1,\vec{u}_2,\ldots,\vec{u}_k}(\vec{v})=\Lambda_{\vec{u}_j}(\vec{v})=\vec{u}_j-\vec{v}\nonumber
\end{align}
with $\vec{v}\cdot \vec{u}_j$ if and only if $||\vec{v}-\vec{u}_j||=\mathrm{min}\left \{||\vec{u}_s-\vec{v}||\right \}_{s=1}^{k}$. We call
\begin{align}
     ||\Lambda_{\vec{u}_1,\vec{u}_2,\ldots,\vec{u}_k}(\vec{v})||=||\Lambda_{\vec{u}_j}(\vec{v})||=||\vec{v}-\vec{u}_j||\nonumber
\end{align}
the \emph{measure} of magnetization. We denote the magnetic field of the magnet $\vec{u}_j$ by $\mathcal{O}_{\vec{u}_j}$. Strictly speaking, we denote the $\epsilon$-magnetic field of the magnet $\vec{u}_j$ by $\mathcal{O}_{\vec{u}_j}(\epsilon)$ and we say that $\vec{v}_i\in \mathcal{O}_{\vec{u}_j}(\epsilon)$ if and only  
\begin{align}
    ||\vec{u}_j-\vec{v}_i||\leq \epsilon.\nonumber
\end{align}
\end{definition}
\bigskip

\begin{definition}\label{magnetic boundary}
Let $\mathcal{C}$ be a simple closed curve in $\mathbb{R}^n$ with boundary $\mathcal{C}_{\mathcal{B}}$. Then we say that the boundary is magnetic with magnets $\cup_{i=1}^{\infty}\{\vec{u}_i\}\in \mathcal{C}_{\mathcal{B}}$ if for any $\vec{u}_j\in \mathcal{C}_{\mathcal{B}}$, there exists some $\vec{u}_i\in \cup_{i=1}^{\infty}\{\vec{u}_i\}$ with
\begin{align}
  \vec{u}_i \in \mathcal{O}_{\vec{u}_j}(\epsilon)\nonumber
\end{align}
for any $\epsilon>0$ such that 
\begin{align}
  \mathcal{O}_{\vec{u}_i}(\delta)\bigcap \mathcal{C}_{\mathrm{Int}}\neq \emptyset \nonumber 
\end{align}
and 
\begin{align}
   \mathcal{O}_{\vec{u}_i}(\delta)\bigcap (\mathbb{R}^n\setminus \mathcal{C}_{\mathcal{B}}\cup \mathcal{C}_{\mathrm{Int}})\neq \emptyset \nonumber 
\end{align}
for any $\delta>0$. We denote by
\begin{align}
\mathcal{B}(\mathcal{O}_{\vec{u}_j}(\epsilon))\nonumber
\end{align}
the boundary of the \emph{neighbourhood} $\mathcal{O}_{\vec{u}_j}(\epsilon)$.
\end{definition}
\bigskip

The language espoused in Definition \ref{magnetic boundary} suggests very clearly that magnets are allowed to be dense on the boundary of their simple closed $\mathcal{C}$ with the magnetic boundary $\mathcal{C}_{\mathcal{B}}$. This strict enforcement will ease the development of our geometry and related theories. In any case, we can take the infinite set of magnets on the boundary of a typical simple close curve to be the entire boundary $\mathcal{C}_{\mathcal{B}}\subset \mathbb{R}^n$.

\begin{remark}
Next, we put all simple closed curves with magnetic boundaries equipped with magnets and their constant dilates into one single category. In essence, we would consider two simple close curves with magnetic boundaries to be distinct if there exists a magnet on the boundary of one curve which fails to be a constant dilate on the other.
\end{remark}

\begin{proposition}\label{uniqueness}
Every simple closed curve $\mathcal{C}$ in $\mathbb{R}^n$ with magnetic boundary is uniquely determined by their magnetic boundary $\mathcal{C}_{\mathcal{B}}$ up to constant dilates of their magnets.
\end{proposition}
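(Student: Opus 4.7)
The plan is to reduce the proposition to the density condition built into Definition \ref{magnetic boundary} combined with the equivalence of curves introduced in the preceding remark. Under that equivalence two curves are declared identical whenever every magnet on one is a constant dilate of some magnet on the other, so the statement becomes the assertion that the equivalence class of $\mathcal{C}$ is already recovered from $\mathcal{C}_{\mathcal{B}}$ alone. I would set up the argument contrapositively: assume two simple closed curves $\mathcal{C}$ and $\mathcal{C}'$ share the same magnetic boundary up to constant dilates of their magnets, and aim to conclude $\mathcal{C}=\mathcal{C}'$ in the sense of the remark.

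The first step is to make precise the idea that the magnets \emph{fill} the boundary. Definition \ref{magnetic boundary} guarantees that for every $\vec{u}_j\in\mathcal{C}_{\mathcal{B}}$ and every $\epsilon>0$ there exists a magnet $\vec{u}_i$ with $||\vec{u}_j-\vec{u}_i||\le\epsilon$, so the magnet collection is dense in $\mathcal{C}_{\mathcal{B}}$. Since a simple closed curve is a closed subset of $\mathbb{R}^n$, the closure of the magnet set coincides exactly with $\mathcal{C}_{\mathcal{B}}$, and therefore specifying the magnets (up to dilation) specifies the full point-set $\mathcal{C}_{\mathcal{B}}$ (up to dilation). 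The Jordan--Brouwer separation theorem then recovers $\mathcal{C}_{\mathrm{Int}}$ from $\mathcal{C}_{\mathcal{B}}$, so the entire simple closed curve $\mathcal{C}$ is pinned down up to the same dilation.

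The main obstacle I anticipate is the informality of the phrase \emph{constant dilate of a magnet}: one has to decide whether a single global scalar acts on the entire magnet collection, or whether each magnet is allowed its own scalar. In the first reading the proposition is almost immediate from the density argument above. In the second reading one must extend the per-magnet scaling $\vec{u}_i\mapsto c_i\vec{u}_i$ continuously from the dense magnet subset to all of $\mathcal{C}_{\mathcal{B}}$, and then argue by connectedness of the simple closed curve that any such continuous extension is locally, hence globally, a single constant. Once this interpretive point is fixed the topological closure step combined with the Jordan--Brouwer theorem delivers the uniqueness claim.
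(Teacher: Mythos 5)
Your argument and the paper's proof are aimed at two different implications, and the one you prove is not the one the paper's own proof establishes. The paper fixes a \emph{single} curve $\mathcal{C}$ carrying \emph{two} magnet systems $\cup_{i=1}^{\infty}\{\vec{s}_i\}$ and $\cup_{i=1}^{\infty}\{\vec{t}_i\}$, picks an arbitrary interior point $\vec{v}$, applies the magnetization map $\Lambda$ to each system, and uses the fact that the two nearest magnets $\vec{s}_j$ and $\vec{t}_k$ both minimize the distance to $\vec{v}$ and both satisfy the orthogonality condition $\vec{v}\cdot\vec{s}_j=\vec{v}\cdot\vec{t}_k=0$ to conclude $\vec{t}_k=\lambda\vec{s}_j$; letting $\vec{v}$ range over $\mathcal{C}_{\mathrm{Int}}$ then matches the two magnet systems up to dilates. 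That is the content later invoked in the application: the magnet system attached to a given curve is essentially canonical. You instead prove a converse-type statement --- the magnets are dense in $\mathcal{C}_{\mathcal{B}}$, their closure is $\mathcal{C}_{\mathcal{B}}$, and the boundary recovers the curve --- and the magnetization map, which is the engine of the paper's argument, never appears in your proposal.

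Beyond this mismatch of direction, two steps of your route fail on their own terms. First, Jordan--Brouwer separation applies to embedded $(n-1)$-spheres, not to one-dimensional simple closed curves in $\mathbb{R}^n$ with $n\geq 3$; a circle in $\mathbb{R}^3$ does not separate space, so recovering $\mathcal{C}_{\mathrm{Int}}$ from $\mathcal{C}_{\mathcal{B}}$ by separation is only available for $n=2$ (the paper sidesteps this by simply positing $\mathcal{C}_{\mathrm{Int}}$ throughout). Second, and more seriously, your handling of ``up to constant dilates of their magnets'' does not close. If each magnet may be rescaled by its own constant $c_i$, the rescaled set $\{c_i\vec{u}_i\}$ is in general not contained in, let alone dense in, any single dilate of $\mathcal{C}_{\mathcal{B}}$, so the density argument no longer pins down the boundary; and the proposed repair --- extend $\vec{u}_i\mapsto c_i\vec{u}_i$ continuously and use connectedness to force a global constant --- has no foundation, because nothing in Definition \ref{magnetic boundary} or in the remark's equivalence makes the assignment $i\mapsto c_i$ continuous, and an arbitrary function on a countable dense subset need not admit any continuous extension. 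Indeed the paper's own Definition \ref{isomorphic connected} allows a different scalar $\lambda$ for each pair of matched magnets, so the single-global-scalar reading you would need is not the intended one. The paper's pointwise argument via $\Lambda$ is precisely what substitutes for the continuity your approach would require.
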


\begin{proof}
Let $\mathcal{C}$ be a simple closed curve with magnetic boundary $\mathcal{C}_{\mathcal{B}}$ in $\mathbb{R}^n$ endowed with magnets $\cup_{i=1}^{\infty}\{\vec{s}_i\}$ and $\cup_{i=1}^{\infty}\{\vec{t}_i\}$. Pick arbitrarily $\vec{v}\in \mathcal{C}_{\mathrm{Int}}$, then apply the magnetization $\Lambda:\mathcal{C}_{\mathrm{Int}}\longrightarrow \mathbb{V}^n$ on the interior $\mathcal{C}_{\mathrm{Int}}$, and we have the equality
\begin{align}
   \Lambda_{\cup_{i=1}^{\infty}\{\vec{s}_i\}}(\vec{v})=\Lambda_{\vec{s}_j}(\vec{v})\nonumber
\end{align}
for some $1\leq j$ with $\vec{v} \cdot \vec{s}_j=0$ if and only if $$
||\vec{v}-\vec{s}_j||=\mathrm{min}\left \{||\vec{s}_i-\vec{v}||\right \}_{i=1}^{\infty}
$$ 
and 
\begin{align}
   \Lambda_{\cup_{i=1}^{\infty}\{\vec{t}_i\}}(\vec{v})=\Lambda_{\vec{t}_k}(\vec{v})\nonumber
\end{align}
for some $1\leq k$ with $\vec{t}_k\cdot \vec{v}=0$ if and only if 
$$
||\vec{v}-\vec{t}_k||=\mathrm{min}\left \{||\vec{t}_i-\vec{v}||\right \}_{i=1}^{\infty}.
$$ 
It follows that $\lambda \vec{s}_j=\vec{t}_k$ for $\lambda \in \mathbb{R}$. Since the vector $\vec{v}$ is an arbitrary point in $\mathcal{C}_{\mathrm{Int}}$, the claim follows immediately. 
\end{proof}
\bigskip

\begin{theorem}\label{neibourhood magnetization}
Let $\mathcal{C}$ be a simple closed curve in $\mathbb{R}^n$ with magnetic boundary $\mathcal{C}_{\mathcal{B}}$ equipped with magnets $\cup_{i=1}^{\infty}\{\vec{u}_i\}$. For any $\vec{v}\in \mathcal{C}_{\mathrm{Int}}$ such that $\vec{v}\neq \vec{O}$, then 
$$
\Lambda_{\cup_{i=1}^{\infty}\{\vec{u}_i\}}(\vec{v})=\Lambda_{\vec{u}_j}(\vec{v})
$$ 
with $\vec{v}\cdot \vec{u}_j=0$ if and only if there exists an $\epsilon>0$ such that $\vec{u}_j\in \mathcal{O}_{\vec{v}}(\epsilon)$ and
\begin{align}
\vec{u}_s\not \in \mathcal{O}_{\vec{v}}(\epsilon)\nonumber
\end{align}
for all $s\neq j$.
\end{theorem}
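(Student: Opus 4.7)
The plan is to prove both implications directly from the definition of the magnetization map in Definition \ref{magnetization}, since the orthogonality clause $\vec{v}\cdot\vec{u}_j=0$ appears symmetrically on both sides of the biconditional, the real content is the equivalence between ``$\vec{u}_j$ realizes the minimum distance'' and ``$\vec{u}_j$ can be isolated from the other magnets by a small open neighbourhood of $\vec{v}$''. So I would treat the orthogonality as carried along for free and focus on the distance characterization.

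For the forward direction, I would assume $\Lambda_{\cup_{i=1}^{\infty}\{\vec{u}_i\}}(\vec{v})=\Lambda_{\vec{u}_j}(\vec{v})$, which by Definition \ref{magnetization} means $\|\vec{v}-\vec{u}_j\|=\min_s\|\vec{u}_s-\vec{v}\|$. I would then set
\[
\rho:=\inf_{s\neq j}\|\vec{v}-\vec{u}_s\|,
\]
and pick any $\epsilon$ with $\|\vec{v}-\vec{u}_j\|\leq\epsilon<\rho$. By construction $\vec{u}_j\in\mathcal{O}_{\vec{v}}(\epsilon)$ while $\|\vec{v}-\vec{u}_s\|>\epsilon$ for every $s\neq j$, so $\vec{u}_s\notin\mathcal{O}_{\vec{v}}(\epsilon)$, which is what is required.

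For the converse, I would assume that some $\epsilon>0$ isolates $\vec{u}_j$ in the sense of the statement. Unpacking the definition of the $\epsilon$-magnetic field, this reads $\|\vec{v}-\vec{u}_j\|\leq\epsilon$ and $\|\vec{v}-\vec{u}_s\|>\epsilon$ for all $s\neq j$. Chaining these two inequalities gives $\|\vec{v}-\vec{u}_j\|\leq\epsilon<\|\vec{v}-\vec{u}_s\|$ for every $s\neq j$, so the minimum in the definition of $\Lambda$ is attained uniquely at the index $j$, and by Definition \ref{magnetization} we conclude $\Lambda_{\cup_{i=1}^{\infty}\{\vec{u}_i\}}(\vec{v})=\Lambda_{\vec{u}_j}(\vec{v})$.

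The main obstacle I anticipate is the case of ties or of a non-attained infimum: because the boundary is equipped with an \emph{infinite} (possibly dense) family of magnets, it is not automatic that the minimum over $s$ exists nor that the infimum $\rho$ above is strictly larger than $\|\vec{v}-\vec{u}_j\|$. In the forward direction, if $\rho=\|\vec{v}-\vec{u}_j\|$, no valid $\epsilon$ exists, so one must argue that the very hypothesis $\Lambda=\Lambda_{\vec{u}_j}$ already forces strict separation from the remaining magnets; this is where the assumption that the minimizer is a \emph{single} index $j$ is crucial and must be read into the statement of Definition \ref{magnetization}. Once this uniqueness of the minimizer is granted, the rest of the argument is the routine unpacking described above.
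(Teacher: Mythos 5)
Your converse direction is fine and is essentially what the paper does. The forward direction is where your proposal and the paper part ways, and where the real trouble lies. You choose $\epsilon$ strictly between $\|\vec{v}-\vec{u}_j\|$ and $\rho=\inf_{s\neq j}\|\vec{v}-\vec{u}_s\|$, and you correctly flag that this requires $\rho>\|\vec{v}-\vec{u}_j\|$. That gap is genuine and, under the paper's own definitions, cannot be closed: Definition \ref{magnetic boundary} forces the magnets to be dense on $\mathcal{C}_{\mathcal{B}}$ (every magnet has another magnet within distance $\epsilon$ for every $\epsilon>0$), so there are magnets $\vec{u}_s$ with $s\neq j$ arbitrarily close to $\vec{u}_j$, hence $\rho=\|\vec{v}-\vec{u}_j\|$ and no isolating $\epsilon$ exists. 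Note also that your proposed fix --- reading uniqueness of the minimizing index into Definition \ref{magnetization} --- is not sufficient: even a unique minimizer can be an accumulation point of the other magnets, so the infimum over $s\neq j$ can equal $\|\vec{v}-\vec{u}_j\|$ without being attained.

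The paper's proof takes a different tack at exactly this point: it sets $\epsilon=\|\vec{v}-\vec{u}_j\|$ (so $\vec{u}_j$ lies on the boundary of $\mathcal{O}_{\vec{v}}(\epsilon)$) and then argues by contradiction that no other magnet lies in this closed ball, claiming that any such magnet would yield a second minimizer $\vec{u}_k$ with $\vec{u}_j=\lambda\vec{u}_k$ for some $\lambda\neq 1$ and hence $\|\vec{v}-\vec{u}_k\|\neq\|\vec{v}-\vec{u}_j\|$. That step is not valid --- two distinct points, scalar multiples of one another or not, can perfectly well be equidistant from $\vec{v}$ --- so the paper does not repair the defect you identified; it obscures it. In short: your reduction of the theorem to a strict-separation property is the right way to see what is actually being asserted, and your converse is correct, but the forward implication fails for a dense family of magnets, and neither your sketch nor the paper's argument closes that hole.
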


\begin{proof}
First, we let $\mathcal{C}$ be a simple closed curve in $\mathbb{R}^n$ with magnetic boundary $\mathcal{C}_{\mathcal{B}}$ equipped with magnets $\cup_{i=1}^{\infty}\{\vec{u}_i\}$. Let us pick arbitrarily $\vec{v}\in \mathcal{C}_{\mathrm{Int}}$ such that $\vec{v}\neq \vec{O}$, apply the magnetization $\Lambda:\mathcal{C}_{\mathrm{Int}}\longrightarrow \mathbb{V}^n$ and suppose 
\begin{align}
\Lambda_{\cup_{i=1}^{\infty}\{\vec{u}_i\}}(\vec{v})=\Lambda_{\vec{u}_j}(\vec{v})\nonumber
\end{align}
with $\vec{v}\cdot \vec{u}_j=0$, then 
$$
||\vec{v}-\vec{u}_j||=\mathrm{min}\left \{||\vec{u}_s-\vec{v}||\right \}_{s=1}^{\infty}
$$ 
for all $\vec{u}_s \in \cup_{i=1}^{\infty}\{\vec{u}_i\}$. Now, we choose $\epsilon=||\vec{u}_j-\vec{v}||$ and build the \emph{neighbourhood} $\mathcal{O}_{\vec{v}}(\epsilon)$. By virtue of the construction $\vec{u}_j\in \mathcal{O}_{\vec{v}}(\epsilon)$. We claim that $\vec{u}_s\not \in \mathcal{O}_{\vec{v}}(\epsilon)$ for all $s\neq j$ with 
\begin{align}
\vec{u}_s\in \cup_{i=1}^{\infty}\{\vec{u}_i\}.\nonumber
\end{align}
Let us suppose to the contrary that there exist at least some 
$$
\vec{u}_t\in \cup_{i=1}^{\infty}\{\vec{u}_i\}\subset \mathcal{C}_{\mathcal{B}}
$$ 
with $t\neq j$ such that $\vec{u}_t\in \mathcal{O}_{\vec{v}}(\epsilon)$. Then there exists some 
$$
\vec{u}_k \in \cup_{i=1}^{\infty}\{\vec{u}_i\}\subset \mathcal{C}_{\mathcal{B}}
$$ 
with $k\neq j$ such that $\vec{u}_k\in \mathcal{B}(\mathcal{O}_{\vec{v}}(\epsilon))\cap \mathcal{C}_{\mathcal{B}}$ so that 
\begin{align}
||\vec{v}-\vec{u}_k||&=\epsilon=\mathrm{min}\left \{||\vec{u}_s-\vec{v}||\right \}_{s=1}^{\infty}. \nonumber
\end{align}
Applying magnetization, it follows that
\begin{align}
\Lambda_{\cup_{i=1}^{\infty}\{\vec{u}_i\}}(\vec{v})=\Lambda_{\vec{u}_k}(\vec{v})\nonumber
\end{align}
with $\vec{v}\cdot \vec{u}_k=0$. The result is $\vec{u}_j=\lambda \vec{u}_k$ for $\lambda \neq 1$ so that 
\begin{align}
\epsilon=||\vec{v}-\vec{u}_k||\neq ||\vec{v}-\vec{u}_j||\nonumber
\end{align}
which contradicts our choice of $||\vec{v}-\vec{u}_j||=\epsilon>0$.\\ 

Conversely, suppose that there exists $\epsilon>0$ such that $\vec{u}_j\in \mathcal{O}_{\vec{v}}(\epsilon)$ and
\begin{align}
\vec{u}_s\not \in \mathcal{O}_{\vec{v}}(\epsilon)\nonumber
\end{align}
for all $s\neq j$. Then 
\begin{align}
||\vec{v}-\vec{u}_j||=\mathrm{min}\left \{||\vec{u}_s-\vec{v}||\right \}_{s=1}^{\infty}\nonumber
\end{align}
for all $\vec{u}_s\in \cup_{i=1}^{\infty}\{\vec{u}_i\}$ and $\vec{v}\in \mathcal{C}_{\mathrm{Int}}$. The claim follows immediately from this assertion by applying the magnetization $$\Lambda:\mathcal{C}_{\mathrm{Int}}\longrightarrow \mathbb{V}^n.$$
\end{proof}
\bigskip

\begin{theorem}\label{existence}
Let $\mathcal{C}$ be a simple closed curve in $\mathbb{R}^n$ with magnetic boundary $\mathcal{C}_{\mathcal{B}}$ equipped with magnets $\cup_{i=1}^{\infty}\{\vec{u}_i\}$. Then for any $\vec{v}\in \mathcal{C}_{\mathrm{Int}}$ such that $\vec{v}\neq \vec{O}$ there exists some $\epsilon>0$ such that $\vec{u}_j\in \mathcal{O}_{\vec{v}}(\epsilon)$ for some $\vec{u}_j\in \cup_{i=1}^{\infty}\{\vec{u}_i\}$ and 
\begin{align}
\vec{u}_t\not \in \mathcal{O}_{\vec{v}}(\epsilon)\nonumber
\end{align}
with $t\neq j$ for all $\vec{u}_t\in \cup_{i=1}^{\infty}\{\vec{u}_i\}$.
\end{theorem}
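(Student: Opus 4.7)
The plan is to reduce this existence statement to Theorem \ref{neibourhood magnetization} by first producing a nearest magnet $\vec{u}_j$ to $\vec{v}$ and then feeding it into the forward direction of that theorem.

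I would carry this out in three steps. First, I would verify that the infimum $\inf_{s} \|\vec{u}_s - \vec{v}\|$ is realised by some magnet. Since $\mathcal{C}$ is a simple closed curve in $\mathbb{R}^n$, its boundary $\mathcal{C}_{\mathcal{B}}$ is compact, and the distance map $\vec{u} \mapsto \|\vec{v} - \vec{u}\|$ is continuous, so the minimum over $\mathcal{C}_{\mathcal{B}}$ is attained; Definition \ref{magnetic boundary}, together with the remark permitting the magnet set to fill all of $\mathcal{C}_{\mathcal{B}}$, then lets me take this minimiser to be an actual magnet $\vec{u}_j$. Because $\vec{v}$ lies in the open interior $\mathcal{C}_{\mathrm{Int}}$ and the boundary is closed, the quantity $\epsilon := \|\vec{v} - \vec{u}_j\|$ is strictly positive. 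Second, I would apply Definition \ref{magnetization} to conclude $\Lambda_{\cup_{i=1}^{\infty}\{\vec{u}_i\}}(\vec{v}) = \Lambda_{\vec{u}_j}(\vec{v})$. Third, I would invoke the forward implication of Theorem \ref{neibourhood magnetization}, which turns this magnetization identity into the desired $\epsilon > 0$ satisfying $\vec{u}_j \in \mathcal{O}_{\vec{v}}(\epsilon)$ and $\vec{u}_t \notin \mathcal{O}_{\vec{v}}(\epsilon)$ for every $t \neq j$.

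The main obstacle I expect is the uniqueness of the nearest magnet: if two distinct magnets sat at exactly the same minimal distance from $\vec{v}$, no single $\epsilon$-ball could isolate just one of them, and the statement as written would fail. My intended way around this is to invoke the paper's own convention, made explicit in the remark preceding Proposition \ref{uniqueness}, under which two magnets related by a constant dilation are identified; a putative second minimiser at the same distance from $\vec{v}$ would, via the argument in Proposition \ref{uniqueness}, be a constant dilate of $\vec{u}_j$ and therefore coincide with it in the ambient category. Granted this identification, the choice $\epsilon = \|\vec{v} - \vec{u}_j\|$ fed into Theorem \ref{neibourhood magnetization} completes the proof.
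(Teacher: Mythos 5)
Your overall route is genuinely different from the paper's. The paper proves Theorem \ref{existence} from scratch: it constructs a minimal neighbourhood $\mathcal{O}_{\vec{v}}(\delta)$ among all $\epsilon$-balls containing a magnet, reads off $\delta$ as the minimal distance, and then reruns the same contradiction argument (a second magnet in the ball forces a boundary magnet $\vec{u}_t$ with $||\vec{v}-\vec{u}_t||=\delta$, which is then declared a dilate $\alpha\vec{u}_j$ with $\alpha\neq 1$ and hence at a different distance) that already appears almost verbatim in the proof of Theorem \ref{neibourhood magnetization}. You instead (i) establish attainment of $\min_s||\vec{u}_s-\vec{v}||$ by compactness of $\mathcal{C}_{\mathcal{B}}$ and continuity of the distance, and (ii) feed the resulting magnetization identity into the forward direction of Theorem \ref{neibourhood magnetization}. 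This is a cleaner decomposition, and your compactness step supplies something the paper silently assumes, namely that the minimum in Definition \ref{magnetization} is attained at all (with only a dense set of magnets the infimum need not be realised by a magnet; your appeal to the remark allowing the magnet set to be all of $\mathcal{C}_{\mathcal{B}}$ is the right fix). One small mismatch: the forward direction of Theorem \ref{neibourhood magnetization} takes as hypothesis the identity together with the clause $\vec{v}\cdot\vec{u}_j=0$, which you never verify; the paper never verifies it either, so this defect is inherited from Definition \ref{magnetization} rather than introduced by you.

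The genuine gap is the uniqueness step, and your patch for it does not work. If two distinct magnets $\vec{u}_j\neq\vec{u}_t$ realise the same minimal distance $\epsilon$ from $\vec{v}$ --- for instance $\vec{v}$ the centre of a disc not centred at $\vec{O}$, where every boundary point is a minimiser --- then every ball $\mathcal{O}_{\vec{v}}(\epsilon')$ with $\epsilon'\geq\epsilon$ contains both and every ball with $\epsilon'<\epsilon$ contains neither, so no $\epsilon$ isolates a single magnet and the conclusion of Theorem \ref{existence} fails for that $\vec{v}$. Your proposed repair --- that a second minimiser would be a constant dilate of $\vec{u}_j$ and coincide with it in the ambient category --- fails twice over: two equidistant boundary points are in general not scalar multiples of one another about $\vec{O}$, and even when they are, the identification up to dilates in the remark before Proposition \ref{uniqueness} is an equivalence on curves-with-magnets, not a collapsing of distinct points of $\mathbb{R}^n$; both points still lie in the ball, so the isolation claim still fails. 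To be fair, the paper's own proof commits exactly the same sin (the unexplained assertion $\vec{u}_t=\alpha\vec{u}_j$ with $\alpha\neq 1$, followed by the non sequitur $||\vec{v}-\vec{u}_t||\neq||\vec{v}-\vec{u}_j||$), so you have faithfully reproduced, rather than created, the hole --- but you should not present the dilate argument as closing it.
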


\begin{proof}
Let $\mathcal{C}_{\mathcal{B}}$ and $\mathcal{C}_{\mathrm{Int}}$ be the magnetic boundary and the interior of the simple closed curve $\mathcal{C}$, respectively. Let $\cup_{i=1}^{\infty}\{\vec{u}_i\}\subset \mathcal{C}_{\mathcal{B}}$ be the magnets on the boundary. Pick arbitrarily $\vec{v}\in \mathcal{C}_{\mathrm{Int}}$ such that $\vec{v}\neq \vec{O}$, then for each $\vec{u}_j\in \cup_{i=1}^{\infty}\{\vec{u}_i\}$ there exists $\epsilon>0$ such that 
\begin{align}
  \vec{u}_j\in \mathcal{O}_{\vec{v}}(\epsilon).\nonumber
\end{align}
Let us choose 
\begin{align}
\mathcal{O}_{\vec{v}}(\delta)=\mathrm{min}\left \{\mathcal{O}_{\vec{v}}(\epsilon)|~\vec{u}_j\in \mathcal{O}_{\vec{v}}(\epsilon),~\epsilon>0\right\}\label{choice}
\end{align}
for each $\vec{u}_j\in \cup_{i=1}^{\infty}\{\vec{u}_i\}$. Let $\vec{u}_j\in \mathcal{O}_{\vec{v}}(\delta)$, then it follows that $||\vec{u}_j-\vec{v}||=\delta$ by virtue of \eqref{choice} and 
\begin{align}
||\vec{u}_j-\vec{v}||=\delta=\mathrm{min}\left \{||\vec{v}-\vec{u}_s||:\vec{u}_s\in \cup_{i=1}^{\infty}\{\vec{u}_i\}\right \}.\label{choice minimal}
\end{align}
Apply the magnetization $\Lambda:\mathcal{C}_{\mathrm{Int}}\longrightarrow \mathbb{V}^n$, and we have 
\begin{align}
\Lambda_{\cup_{i=1}^{\infty}\{\vec{u}_i\}}(\vec{v})=\Lambda_{\vec{u}_j}(\vec{v})\nonumber
\end{align}
with $\vec{u}_j\cdot \vec{v}=0$. Suppose to the contrary $\vec{u}_s \in \mathcal{O}_{\vec{v}}(\delta)$ for at least some $\vec{u}_s\in \cup_{i=1}^{\infty}\{\vec{u}_i\}$ with $j\neq s$. Then it follows that there exists some $\vec{u}_t\in \cup_{i=1}^{\infty}\{\vec{u}_i\}$ with $t\neq j$ such that 
\begin{align}
\vec{u}_t\in \mathcal{C}_{\mathcal{B}}\cap \mathcal{B}(\mathcal{O}_{\vec{v}}(\delta)).\nonumber
\end{align}
It follows that 
\begin{align}
||\vec{v}-\vec{u}_t||=\delta =\mathrm{min}\left \{||\vec{v}-\vec{u}_s||:\vec{u}_s\in \cup_{i=1}^{\infty}\{\vec{u}_i\}\right \}\nonumber
\end{align}
so that by applying the magnetization $\Lambda:\mathcal{C}_{\mathrm{Int}}\longrightarrow \mathbb{V}^n$, we have
\begin{align}
\Lambda_{\cup_{i=1}^{\infty}\{\vec{u}_i\}}(\vec{v})=\Lambda_{\vec{u}_t}(\vec{v})\nonumber
\end{align}
with $\vec{u}_t\cdot \vec{v}=0$. It follows that there exists some $\alpha \neq 1$ such that $\vec{u}_t=\alpha \vec{u}_j$ so that 
\begin{align}
||\vec{v}-\vec{u}_t||=\delta \neq ||\vec{v}-\vec{u}_j||\nonumber
\end{align}
thereby contradicting \eqref{choice minimal}. This completes the proof of the theorem.
\end{proof}

\section{Connected and isomorphic simple close curves with magnetic boundaries}
In this section, we introduce a classification scheme for all simple close curves $\mathcal{C}$ in $\mathbb{R}^n$ with magnetic boundaries $\mathcal{C}_{\mathcal{B}}$. This scheme pretty much allows us to put all similar such simple close curves into a single family and choose a representative for our work.

\begin{definition}\label{isomorphic connected}
Let $\mathcal{C}_1$ and $\mathcal{C}_2$ be simple closed curves with magnetic boundaries equipped with magnets $\cup_{i=1}^{\infty}\{\vec{t}_i\}$ and $\cup_{i=1}^{\infty}\{\vec{w}_i\}$, respectively. We say that $\mathcal{C}_1$ and $\mathcal{C}_2$ are connected if there exist some $\vec{t}_j \in \cup_{i=1}^{\infty}\{\vec{t}_i\}$ and some $\vec{w}_k \in \cup_{i=1}^{\infty}\{\vec{w}_i\}$ such that 
\begin{align}
\vec{t}_j=\lambda \vec{w}_k \nonumber
\end{align}
for some $\lambda \in \mathbb{R}$. We denote the connection by $\mathcal{C}_1\leftrightharpoons \mathcal{C}_2$. We say that $\mathcal{C}_1$ and $\mathcal{C}_2$ are \emph{isomorphic} if the connection exists for each $\vec{t}_j \in \cup_{i=1}^{\infty}\{\vec{t}_i\}$ and $\vec{w}_k \in \cup_{i=1}^{\infty}\{\vec{w}_i\}$. We denote the isomorphism by $\mathcal{C}_1 \asymp \mathcal{C}_2$.
\end{definition}
\bigskip

\begin{proposition}\label{embedding-isomorphic}
Let $\mathcal{C}_1$ and $\mathcal{C}_2$ be simple closed curves with magnetic boundaries equipped with magnets $\cup_{i=1}^{\infty}\{\vec{t}_i\}$ and $\cup_{i=1}^{\infty}\{\vec{w}_i\}$, respectively. If $\mathcal{C}_1 \subset \mathcal{C}_2$, then $\mathcal{C}_1 \asymp \mathcal{C}_2$.
\end{proposition}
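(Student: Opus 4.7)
The plan is to lift the argument of Proposition \ref{uniqueness} from the single-curve setting to the two-curve setting, exploiting the inclusion $\mathcal{C}_1 \subset \mathcal{C}_2$ to bring both magnet families to bear on any interior point of $\mathcal{C}_1$. The first step is to observe that the inclusion forces $\mathcal{C}_{1,\mathrm{Int}} \subseteq \mathcal{C}_{2,\mathrm{Int}}$, so every non-zero $\vec{v} \in \mathcal{C}_{1,\mathrm{Int}}$ admits simultaneously the two magnetizations
\begin{align*}
\Lambda_{\cup_{i=1}^{\infty}\{\vec{t}_i\}}(\vec{v}) = \Lambda_{\vec{t}_j}(\vec{v}) = \vec{t}_j - \vec{v}, \quad \Lambda_{\cup_{i=1}^{\infty}\{\vec{w}_i\}}(\vec{v}) = \Lambda_{\vec{w}_k}(\vec{v}) = \vec{w}_k - \vec{v},
\end{align*}
both subject to the orthogonality relations $\vec{v}\cdot\vec{t}_j=0$ and $\vec{v}\cdot\vec{w}_k=0$ of Definition \ref{magnetization}.

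Next I would argue, exactly as in Proposition \ref{uniqueness}, that since $\vec{t}_j$ and $\vec{w}_k$ both achieve the minimum distance to $\vec{v}$ under the same orthogonality constraint, they must be collinear, i.e.\ $\vec{t}_j = \lambda\,\vec{w}_k$ for some $\lambda \in \mathbb{R}$. This already furnishes a connection $\mathcal{C}_1 \leftrightharpoons \mathcal{C}_2$. To upgrade connection to isomorphism, I would then vary $\vec{v}$ across $\mathcal{C}_{1,\mathrm{Int}}$: for each prescribed magnet $\vec{t}_j$ of $\mathcal{C}_1$, Theorem \ref{neibourhood magnetization} (combined with Theorem \ref{existence}) supplies a point $\vec{v}$ for which $\vec{t}_j$ is \emph{the} unique nearest magnet in the first family, and the associated $\vec{w}_k$ from the display above delivers the required dilation partner.

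The main obstacle is the reverse direction required by Definition \ref{isomorphic connected}, namely producing, for an arbitrary magnet $\vec{w}_k$ on the potentially much larger boundary $\mathcal{C}_{2,\mathcal{B}}$, a matching magnet $\vec{t}_j$ of $\mathcal{C}_1$: a magnet of $\mathcal{C}_2$ that lies far from $\mathcal{C}_1$ need not be the nearest $\vec{w}$-magnet of any $\vec{v} \in \mathcal{C}_{1,\mathrm{Int}}$. I would address this by exploiting the inclusion $\mathcal{C}_1 \subset \mathcal{C}_2$ to locate an interior point of $\mathcal{C}_1$ close to the relevant portion of $\mathcal{C}_{2,\mathcal{B}}$, and then invoking the density of magnets on the magnetic boundary from Definition \ref{magnetic boundary} to approximate $\vec{w}_k$ by a magnet that is realized as the nearest one from some such $\vec{v}$; any residual approximation can be absorbed into the scalar $\lambda$. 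Once both directions are covered, the isomorphism $\mathcal{C}_1 \asymp \mathcal{C}_2$ follows from Definition \ref{isomorphic connected}.
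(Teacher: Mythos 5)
Your core argument coincides with the paper's: take a nonzero $\vec{v}\in\mathcal{C}_{1_{\mathrm{Int}}}$, note that the inclusion places it in $\mathcal{C}_{2_{\mathrm{Int}}}$ as well, apply both magnetizations to obtain nearest magnets $\vec{t}_j$ and $\vec{w}_k$ subject to the orthogonality conditions of Definition \ref{magnetization}, and conclude $\vec{w}_k=\lambda\vec{t}_j$. That is precisely the paper's proof, which then stops, asserting that since $\vec{v}$ was arbitrary and the minimizing magnet is unique, "the claim follows."

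Where you go further is in explicitly confronting the quantifier structure of Definition \ref{isomorphic connected}, which the paper's proof silently elides: an isomorphism requires a dilation partner for \emph{every} magnet of each curve, not merely for those magnets that happen to arise as nearest points of some common interior point. Your observation that a magnet of $\mathcal{C}_2$ lying far from $\mathcal{C}_1$ need not be realized as the minimizer for any $\vec{v}\in\mathcal{C}_{1_{\mathrm{Int}}}$ identifies exactly the hole in the paper's own argument. However, the patch you sketch would not survive being written out. Theorems \ref{neibourhood magnetization} and \ref{existence} run from a given interior point to its unique nearest magnet; neither supplies, for a prescribed magnet, an interior point realizing it as the minimizer (and for a non-convex boundary such a point need not exist). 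More seriously, the final step of "absorbing the residual approximation into the scalar $\lambda$" fails: two distinct nearby points on a curved boundary are not in general scalar multiples of one another, so the density of magnets from Definition \ref{magnetic boundary} gives you an approximating magnet but not the exact collinearity $\vec{t}_j=\lambda\vec{w}_k$ that Definition \ref{isomorphic connected} demands. In short, you reproduce the paper's argument where it works, correctly diagnose the gap it leaves open, but your proposed repair of that gap does not go through.
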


\begin{proof}
Suppose that $\mathcal{C}_1 \subset \mathcal{C}_2$ and let $\mathcal{C}_{1_{\mathrm{Int}}}$ and $\mathcal{C}_{2_{\mathrm{Int}}}$ be their interior, respectively. Next, we arbitrarily pick a point $\vec{v}\in \mathcal{C}_{1_{\mathrm{Int}}}$ such that $\vec{v}\neq \vec{O}$, then $\vec{v}\in \mathcal{C}_{2_{\mathrm{Int}}}$ so that 
\begin{align}
\mathrm{min}\left \{||\vec{t}_s-\vec{v}||\right \}_{s=1}^{\infty}\leq \mathrm{min}\left \{||\vec{w}_s-\vec{v}||\right \}_{s=1}^{\infty}.\nonumber
\end{align}
Set 
\begin{align}
||\vec{t}_j-\vec{v}||=\mathrm{min}\left \{||\vec{t}_s-\vec{v}||:\vec{t}_s\in \cup_{i=1}^{\infty}\{\vec{t}_i\}\right \}_{s=1}^{\infty}\nonumber
\end{align}
and 
\begin{align}
||\vec{w}_k-\vec{v}||=\mathrm{min}\left \{||\vec{w}_s-\vec{v}||:\vec{w}_s \in \cup_{i=1}^{\infty}\{\vec{w}_i\} \right \}_{s=1}^{\infty} \nonumber
\end{align}
and apply the magnetization $\Lambda:\mathcal{C}_{1_{\mathrm{Int}}} \longrightarrow \mathbb{V}^n$ and $\Lambda:\mathcal{C}_{2_{\mathrm{Int}}} \longrightarrow \mathbb{V}^n$, then we obtain the following paths
\begin{align}
\lambda_{\cup_{i=1}^{\infty}\{\vec{t}_i\}}(\vec{v}):=\Lambda_{\vec{t}_j}(\vec{v})=\vec{t}_j-\vec{v}\nonumber
\end{align}
with $\vec{t}_j\cdot \vec{v}=0$ and 
\begin{align}
\Lambda_{\cup_{i=1}^{\infty}\{\vec{w}_i\}}(\vec{v}):=\Lambda_{\vec{w}_k}(\vec{v})=\vec{w}_k-\vec{v}\nonumber
\end{align}
with $\vec{w}_k\cdot \vec{v}=0$. It follows that $\vec{w}_k=\lambda \vec{t}_j$ for some $\lambda \in \mathbb{R}$. Since $\vec{v}$ was taken arbitrarily and for any such choice there is a unique choice of magnet on the boundary of each simple close curve minimizing the distance from $\vec{v}$, the claim follows.
\end{proof}

\section{Application to Bellman's lost in the forest problem}

This section illustrates a sketch of an application of the underlying notion to the Bellman lost in the forest problem. The solution is quite algorithmic in nature, but works primarily in parallel with the above developments. The Bellman lost in the forest problem is one of the most important problems in the area of optimization, yet we find the following tools developed in the foregone section useful. The problem is often stated as follows:

\begin{question}[Bellman's lost in the forest problem]
Given a hiker lost in the forest with his orientation unknown, what is the best decision to be taken to exit in the shortest possible time taking into into consideration the shape of the boundary of the forest and the dimension of the space covering the forest?
\end{question}

\subsection{A sketch partial solution}

First, we classify the infinite collection of simple closed curves in $\mathbb{R}^n$ with magnetic boundaries according as they are isomorphic. That is, we consider the partition
\begin{align}
\mathbb{M}:=\bigcup_{k=1}^{\infty}\left \{\mathcal{C}_i \subset \mathbb{R}^n|~\mathcal{C}_i \asymp \mathcal{C}_k\right\}\nonumber
\end{align}
such that 
$$
\left \{\mathcal{C}_i \subset \mathbb{R}^n|~\mathcal{C}_i \asymp \mathcal{C}_k\right\}\bigcap \left \{\mathcal{C}_i \subset \mathbb{R}^n|~\mathcal{C}_i \asymp \mathcal{C}_j\right\}=\emptyset
$$ 
for $k\neq j$ and $\mathcal{C}_k$ and $\mathcal{C}_j$ are not isomorphic. Let us arbitrarily pick a simple closed curve $\mathcal{C}$ in $\mathbb{M}$. It follows that 
\begin{align}
\mathcal{C} \in \left \{\mathcal{C}_i \subset \mathbb{R}^n|~\mathcal{C}_i \asymp \mathcal{C}_k\right\}\nonumber
\end{align}
for some $k\geq 1$. The simple closed curve $\mathcal{C}$ with magnetic boundary equipped with magnets $\cup_{i=1}^{\infty}\{\vec{u}_i\}$ is uniquely determined up to constant dilates of magnets by virtue of the proposition \ref{uniqueness}. Now, let $\vec{v}\neq \vec{O}$ be an arbitrary point (hiker) lost in $\mathcal{C}_{\mathrm{Int}}$ (forest). Using Theorem \ref{existence}, we choose 
\begin{align}
\mathcal{O}_{\vec{v}}(\delta)=\mathrm{min}\left \{\mathcal{O}_{\vec{v}}(\epsilon)|~\vec{u}_j\in \mathcal{O}_{\vec{v}}(\epsilon),~\epsilon>0\right \}\label{choice 1}
\end{align}
for each $\vec{u}_j\in \cup_{i=1}^{\infty}\{\vec{u}_i\}$. By reason of our choice, there exists a magnet $\vec{u}_t\in \mathcal{O}_{\vec{v}}(\delta)$ for $\vec{u}_t\in \cup_{i=1}^{\infty}\{\vec{u}_i\}$ since magnets are dense on the boundary $\mathcal{C}_{\mathcal{B}}$ by definition \ref{magnetic boundary}. Next, we apply the magnetization $\Lambda:\mathcal{C}_{\mathrm{Int}}\longrightarrow \mathbb{V}^n$ and obtain the exit path 
\begin{align}
\Lambda_{\cup_{i=1}^{\infty}\{\vec{u}_i\}}(\vec{v})=\Lambda_{\vec{u}_t}(\vec{v})=\vec{u}_t-\vec{v}\nonumber
\end{align}
with $\vec{v}\cdot \vec{u}_t=0$ using Theorem \ref{neibourhood magnetization} with the least measure of magnetization
\begin{align}
||\vec{v}-\vec{u}_t||=\mathrm{min}\left \{||\vec{v}-\vec{u}_s||:\vec{u}_s\in \cup_{i=1}^{\infty}\{\vec{u}_i\}\right \}.\nonumber
\end{align}
Thus, the hiker exits the forest without being given information on his orientation within the forest with the shortest path $\vec{u}_t-\vec{v}$.
\bigskip

It must be said that the solution as espoused in the sketch may not be viewed as a complete solution to the Bellman lost in the forest problem, because in practice the equivalence of the minimal distance of the hiker $\vec{v}$ to the boundary of the simple close curve (forest) $\mathcal{C}_{\mathcal{B}}$ with the notion of magnetization $\Lambda:\mathcal{C}_{\mathrm{Int}}\longrightarrow \mathbb{V}_n$ under the additional orthogonality condition of the hiker and some point on the boundary of the forest $\vec{v}\cdot \vec{u}_t=0$ may not necessarily hold. Nevertheless, we believe that it is still possible that the problem could be studied under the equivalence
\begin{align}
 \Lambda_{\vec{u}_1,\vec{u}_2,\ldots,\vec{u}_k}(\vec{v})=\Lambda_{\vec{u}_j}(\vec{v})=\vec{u}_j-\vec{v}\nonumber
\end{align}
if and only if
$$
||\vec{v}-\vec{u}_j||=\mathrm{min}\left \{||\vec{u}_s-\vec{v}||\right \}_{s=1}^{k}
$$ 
without the extra regime $\vec{v}\cdot \vec{u}_j=0$ for any hiker in the forest.
\bigskip

\begin{tikzpicture}[>=Latex, line cap=round, line join=round, scale=1]

\draw[thick] (0,0) ellipse [x radius=4cm, y radius=3cm];

\node[font=\small] at (0,3.45) {forest (simple closed curve)};

\foreach \a in {0,10,...,350}{
    \fill ({4*cos(\a)},{3*sin(\a)}) circle (1.5pt);
}

\coordinate (M) at (4,0);
\fill (M) circle (2.6pt);
\draw[very thick] (M) circle (4.5pt);
\node[above right=2pt of M, font=\small] {nearest / strongest magnet};

\coordinate (H) at (1.6,0);
\fill (H) circle (2.6pt);
\node[below left=2pt of H, font=\small] {hiker};

\draw[very thick,->] (H) -- (M);
\node[font=\small] at (2.7,0.35) {shortest path};

\node[align=center, font=\small] at (0,-3.7)
{Dense magnets are mounted on the boundary.\\
The hiker is pulled to the nearest one and exits through the shortest path.};

\end{tikzpicture}

\bibliographystyle{amsplain}

\end{document}